\pgfplotsset{compat=1.18}
\definecolor{hanblue}{rgb}{0.27, 0.42, 0.81}
\definecolor{mordantred19}{rgb}{0.68, 0.05, 0.0}
\definecolor{darkgreen}{rgb}{0.0, 0.38, 0.12}
\definecolor{red}{rgb}{0.8, 0.0, 0.0}
\definecolor{green}{rgb}{0.0, 0.5, 0.0}
\newcommand\restr[2]{{
 \left.\kern-\nulldelimiterspace 
 #1 
 \vphantom{\big|} 
 \right|_{#2} 
 }}
\numberwithin{equation}{section}
\theoremstyle{plain}
\newtheorem{theorem}{Theorem}[section]
\newtheorem{proposition}[theorem]{Proposition}
\newtheorem{remark}[theorem]{Remark}
\theoremstyle{definition}
\definecolor{hanblue}{rgb}{0.27, 0.42, 0.81}
\title{A Robust Fault Detection Filter for Linear Time-Varying System with Non-Gaussian Noise}
\author{Zhemeng Zhang, Yifei Nie, Le Yin\thanks{Corresponding author: \texttt{yinle0002@swu.edu.cn}}\\
School of Computer and Information Sciences, Southwest University\\
Chongqing 400715, P.~R.~China}
\date{}
\begin{document} 
\maketitle

\begin{abstract}
This paper addresses the problem of robust fault detection filtering for linear time-varying (LTV) systems with non-Gaussian noise and additive faults. The conventional generalized likelihood ratio (GLR) method utilizes the Kalman filter, which may exhibit inadequate performance under non-Gaussian noise conditions. To mitigate this issue, a fault detection method employing the $H_{\infty}$ filter is proposed. The $H_{\infty}$ filter is first derived as the solution to a regularized least-squares (RLS) optimization problem, and the effect of faults on the output prediction error is then analyzed. The proposed approach using the $H_{\infty}$ filter demonstrates robustness in non-Gaussian noise environments and significantly improves fault detection performance compared to the original GLR method that employs the Kalman filter. The effectiveness of the proposed approach is illustrated using numerical examples.
\end{abstract}
\vskip .3truecm \noindent Keywords: Linear time-varying systems, $H_{\infty}$ filter, fault detection, non-Gaussian noise
 
\section{Introduction}

Fault detection is crucial for maintaining system stability, particularly in aerospace and manufacturing industries \cite{zhang2014statistical}. With advancements in modern control systems, researchers have consistently focused on enhancing the safety and reliability of diverse systems \cite{zhang2008bibliographical,hwang2009survey}.

The fault detection approach for linear time-invariant (LTI) systems has been extensively studied in \cite{frank1990fault,isermann1997supervision}. As nonlinear systems are more commonly encountered in practical applications, various methods have been developed to address nonlinear systems. One such method involves converting a nonlinear system into a linear time-varying (LTV) system using a first-order Taylor series expansion, which is the well-known extended Kalman filter (EKF) approach. However, the accuracy of EKF state estimates is limited by the neglect of higher-order terms \cite{einicke1999robust}. To overcome this limitation, a statistical linearization method leading to the unscented Kalman filter has been proposed \cite{lefebvre2002comment}. Compared to the EKF, this approach improves the accuracy of the linearized model. Since nonlinear systems are often treated as linear systems, fault detection for linear systems is of particular interest to researchers.

The Kalman filter is well-known for providing optimal linear state estimation under the assumption of Gaussian noise. A fault detection approach based on the Kalman filter, derived from generalized least squares, was proposed in \cite{chen2000generalized}. The generalized likelihood ratio (GLR) method for fault detection was introduced in \cite{willsky1974generalized,willsky1976generalized} and further explored in \cite{basseville1993detection}. An extension of the GLR method using a recursive Kalman filter under weaker assumptions was proposed in \cite{zhang2014statistical}. However, these studies primarily focus on models with Gaussian noise.

A fault detection approach for non-Gaussian noise, based on game theory, was introduced in \cite{chung1998game}. Subsequent works, such as \cite{zhong2010designing,li2009time}, applied the $H_\infty$ filter for fault detection in LTV systems with non-Gaussian noise. These studies leveraged the $H_\infty$ filter for its ability to minimize the maximum estimation error between the true and estimated states, offering greater robustness than the Kalman filter \cite{simon2006optimal}. Based on the previous works, combining the GLR method with the $H_\infty$ filter presents a promising direction for fault detection in non-Gaussian noise systems.

In this paper, the $H_{\infty}$ filter is first reformulated in a RLS form. Then, inspired by the GLR method in \cite{zhang2014statistical}, a generalized innovation ratio (GIR) method is proposed, which derives the fault vector using the least-squares method. The proposed GIR method extends the GLR method for non-Gaussian noise environments, and the newly introduced $H_{\infty}$ filter enhances fault detection performance in such environments.

The paper is organized as follows. Section \ref{sec:kfhf} reformulates the Kalman filter and the $H_{\infty}$ filter in RLS form. Section \ref{sec:innovation} examines the fault effect on the filter innovation and introduces the persistent excitation condition, which can replace the requirement for multiple sensors. Section \ref{sec:theta} presents a least-squares approach for solving the fault vector $\theta$. Section \ref{sec:gir} proposes the GIR method, a fault detection approach for LTV systems with non-Gaussian noise. Section \ref{sec:onset} introduces an fault detection approach for LTV systems with faults of unknown onset time. Finally, numerical examples are given in Section \ref{sec:example} to validate the proposed approach.

\textit{Notation:} $\left( \,\cdot \,\right)^T$ and $\left( \, \cdot \, \right)^{-1}$ are the transposition and inverse of a matrix, respectively. $\mathbb{E}\left[ \,\cdot \,\right]$ represents the expectation operator. $\mathbf{I}$ denotes the identity matrix with appropriate dimension. $\mathbb{R}^n$ stands for the Euclidean space of $n$-dimensional. $\left\{ x_0\,,\,\, x_1 \,, ... \,,\,\,x_k \right\}$ is represented as $x_{0:k}$. Given a vector $x$ and a positive-definite matrix $A \succ 0$, ${\lVert x \rVert}_A^2 = x^TAx$. Given a cost function $J(x)$, $\hat{x} = \arg\min J(x)$ means that $\hat{x}$ is the value at which $J(x)$ attains its minimum.

\section{Preliminaries}
\label{sec:kfhf}
Consider the following state-space model of a fault-free linear time-varying systems with control inputs
\begin{align}
	x_{k+1} &= A_kx_k + B_ku_k + w_k \\
    y_k &= C_kx_k + D_ku_k + v_k
\end{align}
where $x_k \in \mathbb{R}^n$, $u_k \in \mathbb{R}^l$ and $y_k \in \mathbb{R}^p$ denotes the state, input and output vectors at time step $k$, respectively. The process and measurement noises $w_k$ and $v_k$ are mutually independent and independent of $x_0$, and their covariance matrices are given by $Q_k$ and $R_k$, respectively. More specifically, $w_k$ and $v_k$ are assumed to be zero-mean and Gaussian in a typical Kalman filter setting and only zero-mean in a typical $H_\infty$ filter setting. $A_k$, $B_k$, $C_k$, $D_k$ are the state transition matrix, control input matrix, observation matrix and direct feedthrough matrix, respectively.

Then, then LTV systems with additive faults can be formulated as
\begin{align}
	x_{k+1} &= A_kx_k + B_ku_k + w_k + \varPsi_k\theta \label{statefault}\\
    y_k &= C_kx_k + D_ku_k + v_k 
\end{align}
where $\varPsi_k$ is a partially known $n \times m$ time-varying fault profile matrix, $\theta$ is an $n$-dimensional constant fault vector. Different from \cite{ji2005fault} that assumes a known constant fault profile $\varPsi$ and a time-varying fault vector $\theta_k$. In this case, the number of sensors $p$ must satisfy $p \geq m$. The above condition can be replaced by a persistent excitation condition shown in \cite{zhang2014statistical}.

\subsection{Typical fault}
Since $\theta$ is a constant in \eqref{statefault}, the fault profile with step fault can be represented as
\begin{equation}
	\varPsi_k(r) = \tilde{\varPsi}_k \times \mathds{1}_{\left\{ k\, \geq \,r \right\}}
\end{equation}
where $\tilde{\varPsi}_k$ is a known fault profile, $r$ is the unknown onset time of step fault, $\varPsi_k(r)$ represents the dependence of $\varPsi_k$ on $r$, and $\mathds{1}_{\left\{ \cdot \right\}}$ denotes the indicator function.

The fault profile induced by the impulsive fault can be represented as
\begin{equation}
	\varPsi_k(r) = \delta_{k\,,\,\, r} \times \mathbf{I}
\end{equation}
where $\delta_{k\,, \,\,r}$ is the Kronecker delta function that equals 1 when $k=r$. 

It has been shown that fault detection can be achieved by comparing the difference between the state estimates obtained from the faulty system and the fault-free system \cite{zhang2014statistical}. Therefore, accurate state estimation using an appropriate filter is particularly crucial in fault detection.

\subsection{Kalman filter}
For systems with Gaussian noises, the Kalman filter is known as the best linear unbiased estimator in the sense that no other linear estimators can provide a smaller estimation error variance \cite{sorenson1970least}. The Kalman filter can be expressed as the solution to a batch-form least-squares problem 
\begin{align}
	J(x_{0:k+1})&=\lVert x_0-\hat{x}_{0|-1} \rVert _{P_{0|-1}^{-1}}^{2} \notag \\
	&\quad +\sum_{k=0}^{N-1}{\lVert x_{k+1}-A_kx_k - B_ku_k\rVert _{Q_k^{-1}}^{2}} \notag \\
	&\quad + \sum_{k=0}^N{\lVert y_k-C_kx_k - D_ku_k\rVert _{R_k^{-1}}^{2}}
\end{align}
where $\hat{x}_{k|k-1}$ and $P_{k|k-1}$ are the priori state estimate and its error covariance, respectively, at time step $k$. Alternatively, following \cite{sorenson1970least}, an equivalent recursive least-squares cost function for the Kalman filter is given by
\begin{align}
	J(x_k)&=\|  x_k-\hat{x}_{k|k-1} \|_ {P_{k|k-1}^{-1}}^2 +\| y_k-C_kx_k - D_ku_k\|_{ R_{k}^{-1}}^2 
	\label{rfcost}
\end{align}

Let $\tilde{x}_k$ be the priori state estimation error, i.e.,
\begin{equation}
	\tilde{x}_k \triangleq x_k - \hat{x}_{k|k-1}
\end{equation}
Then, the cost function \eqref{rfcost} can be rewritten into the following compact linear regression form 
\begin{equation}
	m_k = H_kx_k + e_k \label{kflin}
\end{equation}
where
\begin{equation}
	m_k = \begin{bmatrix}
		\hat{x}_{k|k-1} \\
		\,\,\, y_k - D_ku_k\,\,\,
	\end{bmatrix},\,\,\,\, H_k = \begin{bmatrix}
		\mathbf{I} \\
		\,\,\,C_k\,\,\,
	\end{bmatrix} ,\,\,\,\, e_k = \begin{bmatrix}
		\,\,\,-\tilde{x}_{k}\,\,\, \\
		v_k
	\end{bmatrix} \notag
\end{equation}
\begin{equation}
	W_k = \mathbb{E}[\,e_ke_k^T\,] = \begin{bmatrix}
		\,\,\,P_{k|k-1} & 0 \,\,\,\\
		0 & R_k \,\,\,
	\end{bmatrix}\notag
\end{equation}
The state estimate $\hat x_{k|k}$ and its associated error covariance $P_{k|k}$ can be obtained as the solution to \eqref{kflin} and are given by
\begin{align}
	\hat{x}_{k|k} &= \hat{x}_{k|k-1} + K_k( y_k - C_k\hat{x}_{k|k-1} - D_ku_k ) \label{lskf1}\\
	P_{k|k} &= ( \mathbf{I} - K_kC_k ) P_{k|k-1} \label{lskf2}
\end{align}\
where
\begin{align}
	K_k &= P_{k|k-1}{C_k}^T\left( C_kP_{k|k-1}{C_k}^T + R_k \right)^{-1} \label{lskf3}
\end{align}

Now, the state can be estimated recursively using \eqref{lskf1} and \eqref{lskf2}. In practical applications, the distribution of the noise may be unknown, and it is necessary to leverage a more robust filtering algorithms to such handle uncertainties.

\subsection{$H_{\infty}$ filter}
$H_{\infty}$ filter is widely applied in practical system as a robust filtering algorithms. $\alpha$ is the performance parameter to limit the ratio of the estimation error to the system error. The cost function is given by
\begin{align} 
	\frac{\sum_{i=0}^{k-1}{\lVert z_i-\hat{z}_{i|i-1} \rVert _{S_i}^{2}}}     
	{\| x_0-\hat{x}_{0|-1} \|^{2}_{P_{0|-1}^{-1}}+\sum_{i=0}^{k-1}{\left( \lVert w_i \rVert _{Q_{i}^{-1}}^{2}+\lVert v_i \rVert _{R_{i}^{-1}}^{2} \right)}}<\frac{1}{\alpha}  \label{eq:cost1}	
\end{align}
where $Q_k$, $R_k$ and $S_k$ are symmetric positive-definite matrices chosen by the designer based on the specific problem. In contrast to $H_{\infty}$ filter, the Kalman filter minimizes the variance of the estimation error weighted by $S_k$ \cite{simon2006optimal}. In cost function \eqref{eq:cost1}, the predicted object becomes a linear combination of the state estimate
\begin{equation}
	z_k = L_k x_k
\end{equation}
where $L_k$ is assumed to be a full-rank matrix chosen by designer. When $L_k = \mathbf{I}$, the states will be estimated directly.

It should be mentioned that the cost of the $H_{\infty}$ filter in \eqref{eq:cost3} has an additional regularization term compared to the cost of the Kalman filter in \eqref{rfcost}. 

\begin{align}
J(x_{k})&=\|x_{k}-\hat x_{k|k-1}\|^2_{P_{k|k-1}^{-1}} -\alpha\| z_k-\hat z_{k|k-1}\|_{S_k}^2 \notag \\
&\quad + \|y_k-C_kx_k - D_ku_k\|_{R_k^{-1}}^2 \label{eq:cost3}
\end{align}

The solution to the regularized least-squares problem in \eqref{eq:cost3} is given by
\begin{align} 
	\hat{x}_{k|k} &= \hat{x}_{k|k-1} + K_k\left( y_k - C_k\hat{x}_{k|k-1} - D_ku_k\right) \\
	P_{k|k} &= { \left( P_{k|k-1}^{-1} - \alpha L_k^TS_kL_k + C_k^TR_k^{-1}C_k \right) }^{-1} \\
	K_k &= { \left( P_{k|k-1}^{-1} - \alpha L_k^TS_kL_k + C_k^TR_k^{-1}C_k \right) }^{-1} C_k^TR_k^{-1} \label{hfkk} \\
	\hat{x}_{k+1|k} &= A_k\hat{x}_{k|k} + B_ku_k \\
	P_{k+1|k} &= A_kP_{k|k}{A_k}^T+Q_k
\end{align}

\begin{remark}
	In practical applications, $S_k$ is usually set as an identity matrix, which indicates that the same interest is assigned to the states in $z_k$. Note that the choice of parameter $\alpha$ needs to balance the robustness and sensitivity of the filter. When $\alpha \rightarrow 0$, the additive regularization term in cost function \eqref{eq:cost3} vanishes, and the $H_{\infty}$ filter reduces to the Kalman filter.
\end{remark}

Let $\tilde{z}_k = z_k - \hat{z}_{k|k-1}$, then, similar to \eqref{kflin}, the $H_{\infty}$ filtering problem can likewise be formulated into a linear regression form \cite{nie2024optimization}.
\begin{equation}
	\tilde{m}_k = \tilde{H}_kx_k + \tilde{e}_k \label{eq:HLS}
\end{equation}
where
\begin{equation}
	\tilde{m}_k = \begin{bmatrix}
		\hat{x}_{k|k-1} \\
		\hat{z}_{k|k-1} \\
		\,\,\, y_k - D_ku_k\,\,\,
	\end{bmatrix},\,\,\,\, \tilde{H}_k = \begin{bmatrix}
		\mathbf{I} \\
		L_k \\
		\,\,\,C_k\,\,\,
	\end{bmatrix} ,\,\,\,\, \tilde{e}_k = \begin{bmatrix}
		\,\,\,-\tilde{x}_{k}\,\,\, \\
		-\tilde{z}_{k} \\
		v_k
	\end{bmatrix} \notag
\end{equation}
\begin{equation}
	\tilde{W}_k = \mathbb{E}[\,\tilde{e}_k\tilde{e}_k^T\,] = \begin{bmatrix}
		\,\,\, P_{k|k-1} & P_{k|k-1}L_k^T & 0 \,\,\,\\
		L_kP_{k|k-1} & L_kP_{k|k-1}L_k^T & 0 \,\,\,\\
		0 & 0 & R_k\,\,\,
	\end{bmatrix} \notag
\end{equation}
To obtain the recursions of the $H_{\infty}$ filter, a different weight matrix $W_k$ will be used for solving \eqref{eq:HLS}. 
\begin{equation}
	W_k = \begin{bmatrix}
		\,\,\,P_{k|k-1} & 0 & 0 \,\,\,\\
		0 & -\frac{1}{\alpha}S_k^{-1} & 0 \,\,\,\\
		0 & 0 & R_k \,\,\,
	\end{bmatrix}	
\end{equation}
Then, the state estimate and its error covariance can be obtained by solving the linear regression problem as follows
\begin{align}
	\hat x_{k|k}&=( \tilde{H}_{k}^{T}W_{k}^{-1}\tilde{H}_k ) ^{-1}\tilde{H}_{k}^{T}W_{k}^{-1}\tilde{m}_k \label{lrx}\\
	P_{k|k} &= ( \tilde{H}_{k}^{T}W_{k}^{-1}\tilde{H}_k ) ^{-1}\ \label{lrp}
\end{align}
which can be further expressed as
\begin{align}
\hat x_{k|k} &= \left( \mathbf{I} - K_kC_k \right) \hat{x}_{k|k-1} + K_k\left( y_k - D_ku_k \right) \\
	P_{k|k} &= { \left( P_{k|k-1}^{-1} - \alpha L_k^TS_kL_k + C_k^TR_k^{-1}C_k \right) }^{-1} \label{hpk} 
\end{align}
where
\begin{equation}
	K_k = { \left( P_{k|k-1}^{-1} - \alpha L_k^TS_kL_k + C_k^TR_k^{-1}C_k \right) }^{-1} C_k^TR_k^{-1}
\end{equation}

\section{Fault effect on the innovation}
\label{sec:innovation}

To detect the fault, it is necessary to define an indicator for measuring the degree of the fault in the system. $\epsilon_k$ is the output prediction errors (also called innovation). The definition of $\epsilon_k$ is as follows
\begin{equation}
	\epsilon_k \triangleq y_k - C_k\hat{x}_{k|k-1} - D_ku_k
\end{equation}
where $\hat{x}_{k|k-1}$ is the state estimate by the physical model and the value of previous time step. Note that $\epsilon_k$ denotes the error between the observation and its prediction. The innovation reflect the degree of observation offset. The innovation larger, the deviation of observation from the fault-free system greater and vice versa. So a tool of the fault detection is the innovation $\epsilon_k$.

The state prediction error and innovation also can be expressed as
\begin{align}
	\tilde{x}_{k+1} &= A_k\left( \mathbf{I} - K_kC_k \right)\tilde{x}_k - A_kK_kv_k + w_k + \varPsi_k\theta \\
	\epsilon_k &= C_k\tilde{x}_k + v_k \label{exv}
\end{align}
The relationship between the fault vector $\theta$ and innovation $\epsilon_k$ can be obtained as 
\begin{equation}
	\epsilon_k = \epsilon_k^0 + C_k\varGamma_k\theta 
	\label{e0theta}
\end{equation}
where $\epsilon_k^0$ is the innovation of the same $H_{\infty}$ filter applied to the fault-free system. The recursive form of $\varGamma_k$ is as follows
\begin{equation}
	\varGamma_{k+1} = A_k\left( \mathbf{I} - K_kC_k \right)\varGamma_k + \varPsi_k
\end{equation}
From \eqref{exv}, the covariance $\varSigma_k$ of the innovation is
\begin{equation}
	\varSigma_k = \mathbb{E}[\epsilon_k\epsilon_k^T] = C_kP_{k|k-1}C_k^T + R_k
	\label{sigmak}
\end{equation}
where $P_{k|k-1}$ is the covariance matrix of the priori estimation error $\tilde{x}_k$.

The persistent excitation condition \cite{zhang2014statistical} is given by
\begin{equation}
	\sum_{k = k - s + 1}^{k} \varGamma_k^TC_k^T\varSigma_k^{-1}C_k\varGamma_k \geq \gamma \times \mathbf{I}
\end{equation}
There exist an integer $s > 0$, a real number $\gamma > 0$, for any $k > s$, the above inequality is always true. The sensors number condition in \cite{ji2005fault} can be instead of this condition.

\section{Solving the fault vector}
\label{sec:theta}

In Section \ref{sec:innovation}, the innovation method for fault detection is discussed. This section serves as preliminary work for the GIR algorithm. Both methods will be validated in the numerical examples in Section \ref{sec:example}.

For solving the fault vector in \eqref{e0theta}, the MLE method is employed in \cite{zhang2014statistical}. By contrast, this paper adopts the least-squares approach to solve for the fault vector. The cost function of \eqref{e0theta} is given by 
\begin{equation}
	J(\theta) = \sum_{j=1}^{k} {\lVert \epsilon_j^0 \rVert}^2_{\varSigma_j^{-1}} = \sum_{j=1}^{k} {\lVert \epsilon_j - C_j\varGamma_j\theta \rVert}^2_{\varSigma_j^{-1}}
	\label{costJ}
\end{equation}
\begin{remark}
	By assumption, $\theta$ is a constant, and $C_k$ and $\varGamma_k$ are known at time step $k$. Hence, the covariance $\varSigma_k^0$ of the innovation $\epsilon_k^0$ in the fault-free system is equal to the covariance $\varSigma_k$ in \eqref{sigmak}. 
\end{remark}
The solution to \eqref{costJ} can be written as
\begin{equation}
	\hat{\theta}_k = \arg\min_{\theta} \sum_{j=1}^{k} {\left( \epsilon_j - C_j\varGamma_j\theta \right)}^T \varSigma_j^{-1} {\left( \epsilon_j - C_j\varGamma_j\theta \right)}
\end{equation}

Let $\frac{dJ}{d\theta} = 0$, it can be obtained that
\begin{equation}
	\sum_{j=1}^{k} \varGamma_j^TC_j^T\varSigma_j^{-1}\epsilon_j = \sum_{j=1}^{k} \varGamma_j^TC_j^T\varSigma_j^{-1}C_j\varGamma_j \theta
\end{equation}
and
\begin{equation}
	\begin{aligned}
	\hat{\theta}_k &= {\left( \sum_{j=1}^{k} \varGamma_j^TC_j^T \varSigma_j^{-1} C_j\varGamma_j \right)}^{-1} \left( \sum_{j=1}^{k} \varGamma_j^TC_j^T \varSigma_j^{-1} \epsilon_j \right) \\
	&= E_k^{-1}d_k
	\end{aligned}
	\label{thetak}	
\end{equation}
where
\begin{align}
	E_k &\triangleq \sum_{j=1}^{k} \varGamma_j^TC_j^T \varSigma_j^{-1} C_j\varGamma_j \label{ek} \\
	d_k &\triangleq \sum_{j=1}^{k} \varGamma_j^TC_j^T \varSigma_j^{-1} \epsilon_j \label{dk}
\end{align}
Then, the recursive forms of $E_k$ and $d_k$ can be expressed as
\begin{align}
	E_k &= E_{k-1} + \varGamma_k^TC_k^T \varSigma_k^{-1} C_k\varGamma_k \\
	d_k &= d_{k-1} + \varGamma_k^TC_k^T \varSigma_k^{-1} \epsilon_k
\end{align}

\section{Fault detection}
\subsection{Fault effect on generalized innovation ratio}
\label{sec:gir}
Consider the GIR, which is defined as
\begin{equation}
	h_k \triangleq e \ln \dfrac{\sum_{j = 1}^{k}\lVert \epsilon_j \rVert^2_{\varSigma_j^{-1}}}{\sum_{j = 1}^{k}\lVert \epsilon_j^0 \rVert^2_{\varSigma_j^{-1}}}
	\label{hkdefine}
\end{equation}
where $e$ is the natural constant.
\begin{proposition}
The GIR  defined in \eqref{hkdefine} can be expressed in terms of $E_k$ and $d_k$ as follows
	\begin{equation}
		h_k = d_k^TE_k^{-1}d_k	\label{hk}
	\end{equation}
\end{proposition}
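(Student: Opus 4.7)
The plan is to recognize that the proposition is the classical weighted sum-of-squares decomposition (a Pythagorean identity) associated with the least-squares problem \eqref{costJ}, applied to the regression relation $\epsilon_j = \epsilon_j^0 + C_j\varGamma_j\theta$ from \eqref{e0theta}. The key idea is to interpret $\epsilon_j^0$ as the residual at the least-squares minimizer $\hat{\theta}_k = E_k^{-1} d_k$ obtained in \eqref{thetak}, so that $\sum_{j=1}^k \lVert \epsilon_j^0 \rVert^2_{\varSigma_j^{-1}}$ is exactly $J(\hat{\theta}_k)$.

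First I would substitute $\epsilon_j^0 = \epsilon_j - C_j\varGamma_j\hat{\theta}_k$ into $J(\hat{\theta}_k)$ and expand the weighted norm. Using the definitions \eqref{ek}--\eqref{dk} of $E_k$ and $d_k$, the cross term becomes $-2\hat{\theta}_k^T d_k$ and the quadratic term becomes $\hat{\theta}_k^T E_k \hat{\theta}_k$, yielding $J(\hat{\theta}_k) = \sum_j \lVert \epsilon_j \rVert^2_{\varSigma_j^{-1}} - 2\hat{\theta}_k^T d_k + \hat{\theta}_k^T E_k \hat{\theta}_k$. Plugging in $\hat{\theta}_k = E_k^{-1} d_k$ collapses the last two terms into $-d_k^T E_k^{-1} d_k$, giving the one-line identity $\sum_j \lVert \epsilon_j^0 \rVert^2_{\varSigma_j^{-1}} = \sum_j \lVert \epsilon_j \rVert^2_{\varSigma_j^{-1}} - d_k^T E_k^{-1} d_k$. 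This is the algebraic core of the proof and is completely routine once the plug-in interpretation of $\epsilon_j^0$ is fixed.

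The main obstacle, and the step I would spend most care on, is reconciling this identity with the precise shape of $h_k$ in \eqref{hkdefine}. The computation above naturally produces the \emph{difference} $\sum_j \lVert \epsilon_j \rVert^2 - \sum_j \lVert \epsilon_j^0 \rVert^2 = d_k^T E_k^{-1} d_k$, whereas \eqref{hkdefine} writes $h_k$ as $e$ times the natural logarithm of the ratio of these two sums. To bridge the gap I would either (i) treat \eqref{hkdefine} as a small-residual linearization by writing the ratio as $1 + d_k^T E_k^{-1} d_k / \sum_j \lVert \epsilon_j^0 \rVert^2$ and invoking $\ln(1+x) \approx x$ with an appropriate normalization that absorbs the factor $e$, or (ii) read the defining formula as shorthand for a Gaussian log-likelihood ratio in the spirit of the classical GLR, under which the desired identity becomes exact. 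In either case the substantive content of the proposition is captured by the Pythagorean step, with the log-to-difference conversion being the delicate bookkeeping that needs explicit justification.
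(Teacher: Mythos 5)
Your algebraic core is exactly the paper's proof: substitute $\epsilon_j^0 = \epsilon_j - C_j\varGamma_j\hat\theta_k$, expand the weighted norms using \eqref{ek}--\eqref{dk}, and plug in $\hat\theta_k = E_k^{-1}d_k$ to collapse the cross and quadratic terms into $d_k^TE_k^{-1}d_k$. That part is correct and matches the paper line for line.

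The step you rightly single out as delicate --- converting the quantity $e\ln\bigl(\sum_j\lVert\epsilon_j\rVert^2_{\varSigma_j^{-1}}\big/\sum_j\lVert\epsilon_j^0\rVert^2_{\varSigma_j^{-1}}\bigr)$ of \eqref{hkdefine} into the \emph{difference} of the two sums --- is precisely the step the paper performs with no justification at all: the first equality in \eqref{lk} simply writes $e\ln(A/B)=A-B$, which is not a valid identity. So your instinct that this is where the real work lies is sound, and neither of your proposed bridges fully closes it: the linearization $\ln(1+x)\approx x$ only holds approximately and only when $d_k^TE_k^{-1}d_k$ is small relative to $\sum_j\lVert\epsilon_j^0\rVert^2_{\varSigma_j^{-1}}$ (and the factor $e$ does not match the needed normalization $\sum_j\lVert\epsilon_j^0\rVert^2_{\varSigma_j^{-1}}$ in general), while the Gaussian log-likelihood-ratio reading makes the difference formula exact but requires reinterpreting \eqref{hkdefine} as a ratio of likelihoods rather than a ratio of the sums as literally written (and produces a factor $2$, not $e$). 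In short: your proof of the substantive identity $\sum_j\lVert\epsilon_j\rVert^2_{\varSigma_j^{-1}}-\sum_j\lVert\epsilon_j^0\rVert^2_{\varSigma_j^{-1}}=d_k^TE_k^{-1}d_k$ is complete and correct, and the residual gap you flag is a genuine defect of the proposition's statement/proof in the paper itself, not of your argument.
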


\begin{proof}
	\begin{equation}
		\begin{aligned}
			h_k &= e \ln \dfrac{\sum_{j = 1}^{k}\lVert \epsilon_j \rVert^2_{\varSigma_j^{-1}}}{\sum_{j = 1}^{k}\lVert \epsilon_j^0 \rVert^2_{\varSigma_j^{-1}}} \\
			&= \sum_{j = 1}^{k}\epsilon_j^T\varSigma_j^{-1}\epsilon_j \\
			&\quad - \sum_{j = 1}^{k} {\left( \epsilon_j - C_j\varGamma_j\hat{\theta}_k \right)}^T \varSigma_j^{-1} \left( \epsilon_j - C_j\varGamma_j \hat{\theta}_k \right) \\
			&= \sum_{j = 1}^{k} \left( 2\epsilon_j^T\varSigma_j^{-1}C_j\varGamma_j\hat{\theta}_k - \hat{\theta}_k^T\varGamma_j^T C_j^T\varSigma_j^{-1}C_j\varGamma_j\hat{\theta}_k\right) \\
		\end{aligned}
		\label{lk}
	\end{equation}
	where $\theta$ is assumed to be a constant fault vector in \eqref{statefault}, $E_k$ is a symmetric matrix. From \eqref{thetak}, so the \eqref{lk} can be rewritten
	\begin{equation}
		\begin{aligned}
			h_k &= 2\left( \sum_{j = 1}^{k} \epsilon_j^T\varSigma_j^{-1}C_j\varGamma_j \right)E_k^{-1}d_k \\
			&\quad - d_k^TE_k^{-1} \left( \sum_{j = 1}^{k} \varGamma_j^T C_j^T\varSigma_j^{-1}C_j\varGamma_j \right) E_k^{-1}d_k \\
			&= d_k^TE_k^{-1}d_k
		\end{aligned}
	\end{equation}
\end{proof}
It is worth noting that $h_k$ represents the difference between $\sum_{j = 1}^{k}\lVert \epsilon_j \rVert^2_{\varSigma_j^{-1}}$ and $\sum_{j = 1}^{k}\lVert \epsilon_j^0 \rVert^2_{\varSigma_j^{-1}}$. The larger the value of $h_k$, the greater the deviation between the estimated state of the practical system and the estimated state of the fault-free system, indicating a higher likelihood of a fault, and vice versa.

\begin{remark}
	It is important to note that if the $H_{\infty}$ filter is replaced with the Kalman filter, the GIR method will degenerate into the conventional GLR method in \cite{zhang2014statistical}.
	\label{rem:girtoglr}
\end{remark}

\subsection{Fault detection with unknown onset time}
\label{sec:onset}

The fault onset time $r$ is always unknown in the practical faulty system. There is a simple approach \cite{zhang2014statistical} to estimate the onset time $r$. Formulate the $\varGamma_k$ with respect to $r$  
\begin{equation}
	\varGamma_{k+1}(r) = A_k\left( \mathbf{I} - K_kC_k \right)\varGamma_k(r) + \varPsi_k(r)
\end{equation}
where $k$ is the current time step and $\varGamma_0(r) = 0$. And now the $E_k$, $d_k$ and $h_k$ are written as
\begin{align}
	E_k(r) &\triangleq \sum_{j = r + 1}^{k} \varGamma_j^T(r) C_j^T \varSigma_j^{-1} C_j\varGamma_j(r) \\
	d_k(r) &\triangleq \sum_{j = r + 1}^{k} \varGamma_j^T(r) C_j^T \varSigma_j^{-1} \epsilon_j \\
	h_k &= d_k^T(r) E_k^{-1}(r) d_k(r)
\end{align}

Then the estimate of fault onset time $r$ is
\begin{equation}
	\hat{r}_k = \arg\max_{1 \leq r \leq k - s} d_k^T(r) E_k^{-1}(r) d_k(r)
	\label{rk}
\end{equation}

Now, it is feasible to detect fault in LTV system with non-Gaussian noise though \eqref{hk} and estimate fault onset time $r$ though \eqref{rk}. 

\section{Numerical Examples}
\label{sec:example}
Consider the state space system with impulsive fault \cite{zhang2014statistical} modeled by
    \begin{equation}
        x_{k + 1} = \begin{bmatrix}
            \,\,0.5 & 1 \,\,\\
            \,\,0 & 1.2\,\,
        \end{bmatrix}x_k + \begin{bmatrix}
			\,\, 0 \,\,\\
			\,\, 1 \,\,
        \end{bmatrix}u_k + w_k + \delta_{k,\, r}\theta
    \end{equation}

    \begin{equation}
        y_k = \begin{bmatrix}
            \, 1 & 0 \,
        \end{bmatrix}x_k + v_k
    \end{equation}
where $x_k \in \mathbb{R}^2$, $y_k \in \mathbb{R}$, $u_k \in \mathbb{R}$ and the fault vector $\theta \in \mathbb{R}^2$. The covariance of the 1-dimensional output noise $v_k$ is $R_k = 0.0025$ and $v_k \sim \mathcal{N}\left( 0 \,, \,\, R_k\right)$. The form of process noise is as follows
\begin{equation}
	w_k = w_{k - 1} + I_{2 \times 1} \times v_k
\end{equation}
where $I_{2 \times 1} = {\left[\, 1 ,\, 1\,\right]}^T$. It is assumed that $r=201$, meaning the impulsive fault occurs at $k=201$, and the search window $\bar{w} = 100$. The parameter matrices in \eqref{hfkk} are set to $S_k = \mathbf{I}$, $L_k = \mathbf{I}$ and $\alpha = 60$.

This system is unstable, and a PI controller with the transfer function $G(z) = 0.209 + 0.0011 / \left( z - 1 \right)$ is applied to ensure that the states remain bounded.

Fig. \ref{fig1} depicts the innovations of both the Kalman filter and the $H_{\infty}$ filter with fault onset at $k = 201$ and fault vector $\theta = {\left[\, 1.5 ,\, 0 \,\right]}^T$. It can be observed that the Kalman filter is more sensitive to non-Gaussian noise, resulting in larger fluctuations in its innovations. In this case, the impact of faults on the innovations becomes relatively insignificant. However, for the $H_{\infty}$ filter, faults can be clearly identified through the innovations, which are less affected by the nominal noise. This advantage becomes even more significant when $\theta = {\left[\, 0.6 ,\, 0 \,\right]}^T$, as shown in Fig. \ref{fig2}, where the fault is nearly indistinguishable in the Kalman filter's innovation but still distinguishable in the $H_{\infty}$ filter's innovation.

The above observation is due to the fact that the Kalman filter's performance is limited when applied to systems with non-Gaussian noise, preventing it from providing accurate state estimates. The large estimation errors caused by the nominal noise and the fault are almost indistinguishable. By contrast, the $H_{\infty}$ filter exhibits strong robustness when applied to such systems. This justifies the selection of the $H_{\infty}$ filter for fault detection in systems with non-Gaussian noise.

Figs. \ref{fig3} and \ref{fig4} illustrate the thresholded (where values smaller than the threshold are set to zero) of the GIR $h_k$ for $\theta = {\left[\, 1.5 ,\, 0 \,\right]}^T$ and $\theta = {\left[\, 0.6 ,\, 0 \,\right]}^T$, respectively. At the top of each figure, the results are obtained using the Kalman filter, while at the bottom, the results are obtained using the $H_{\infty}$ filter. 

It can be seen that when the GLR method with the Kalman filter is applied to systems with non-Gaussian noise, fault detection becomes challenging due to large estimation errors. This issue worsens as the fault magnitude decreases. In contrast, the GIR method with the $H_{\infty}$ filter exhibits a clearer pattern for fault detection, even in the presence of small fault magnitudes.

\begin{figure}[H]
	\centering
	\includegraphics[width=\hsize]{./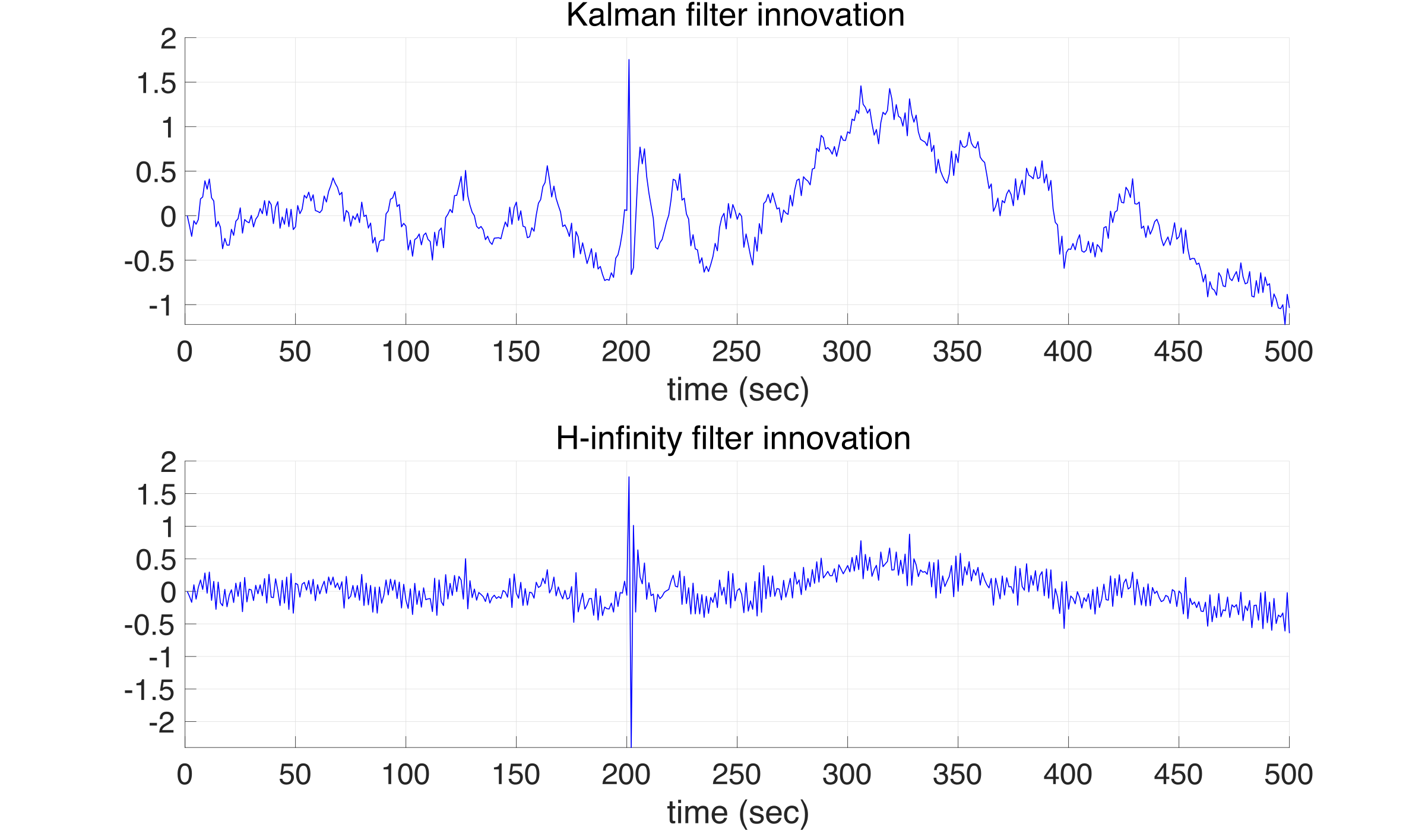}
	\caption{The innovation of Kalman filter (top) and $H_{\infty}$ filter (bottom) for the impulsive fault with $\theta = {\left[\, 1.5 ,\, 0 \,\right]}^T$}
	\label{fig1}
\end{figure}

\begin{figure}[H]
	\centering
	\includegraphics[width=\hsize]{./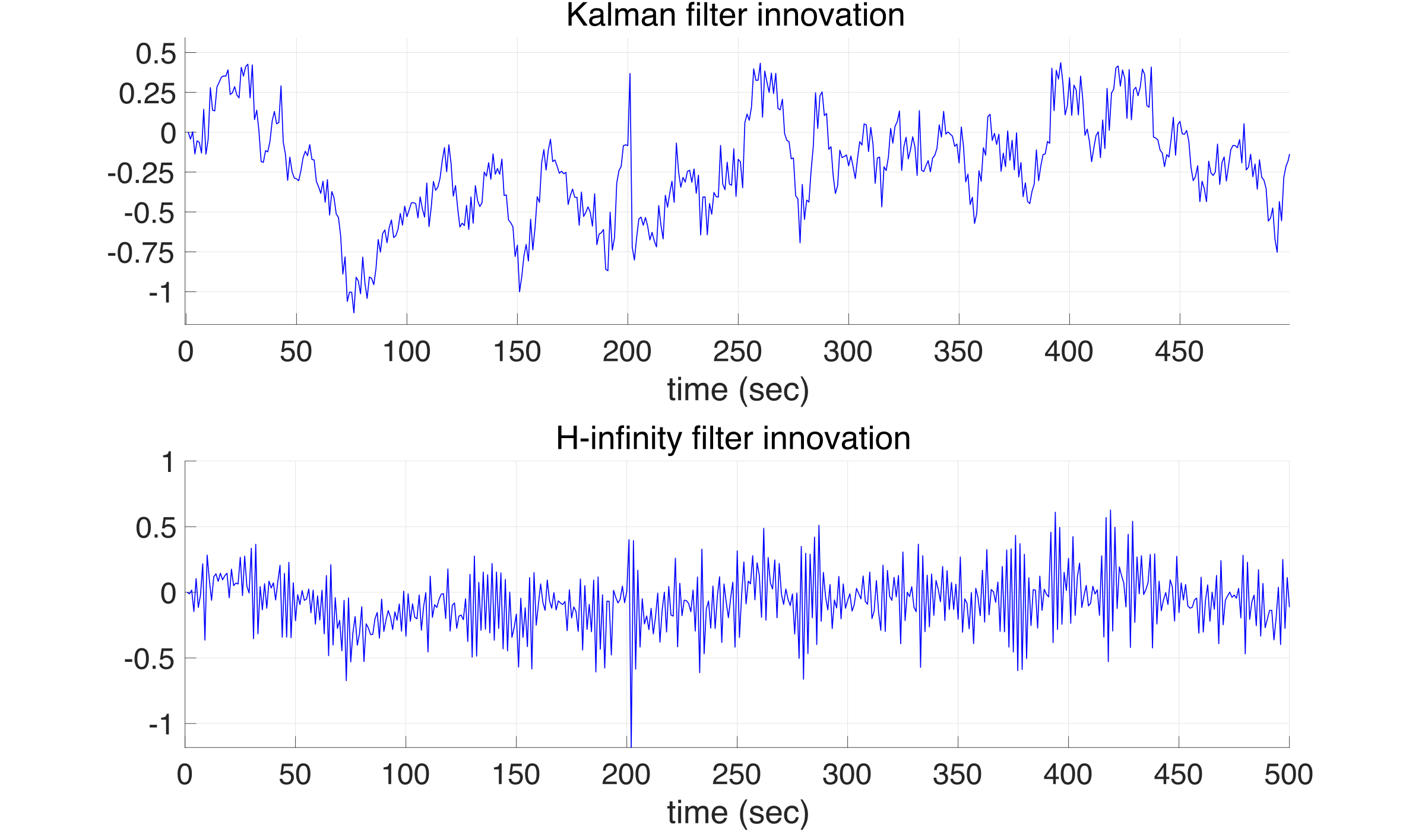}
	\caption{The innovation of Kalman filter (top) and $H_{\infty}$ filter (bottom) for the impulsive fault with $\theta = {\left[\, 0.6 ,\, 0 \,\right]}^T$}
	\label{fig2}
\end{figure}

\begin{figure}[H]
	\centering
	\includegraphics[width=\hsize]{./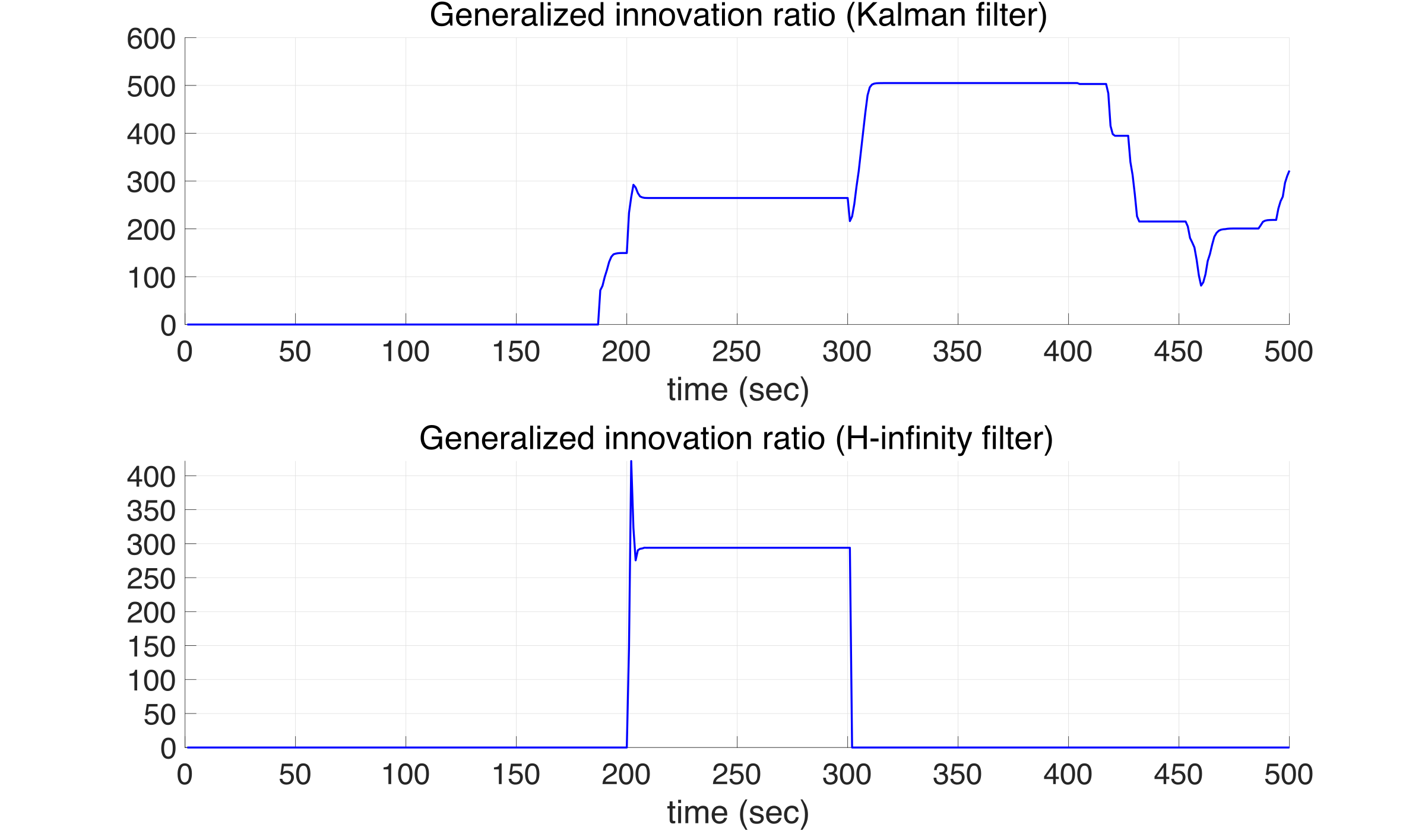}
	\caption{The GIR $h_k$ of Kalman filter (top) and the GIR $h_k$ of $H_{\infty}$ filter (bottom) for the impulsive fault with $\theta = {\left[\, 1.5 ,\, 0 \,\right]}^T$}
	\label{fig3}
\end{figure}

\begin{figure}[H]
	\centering
	\includegraphics[width=\hsize]{./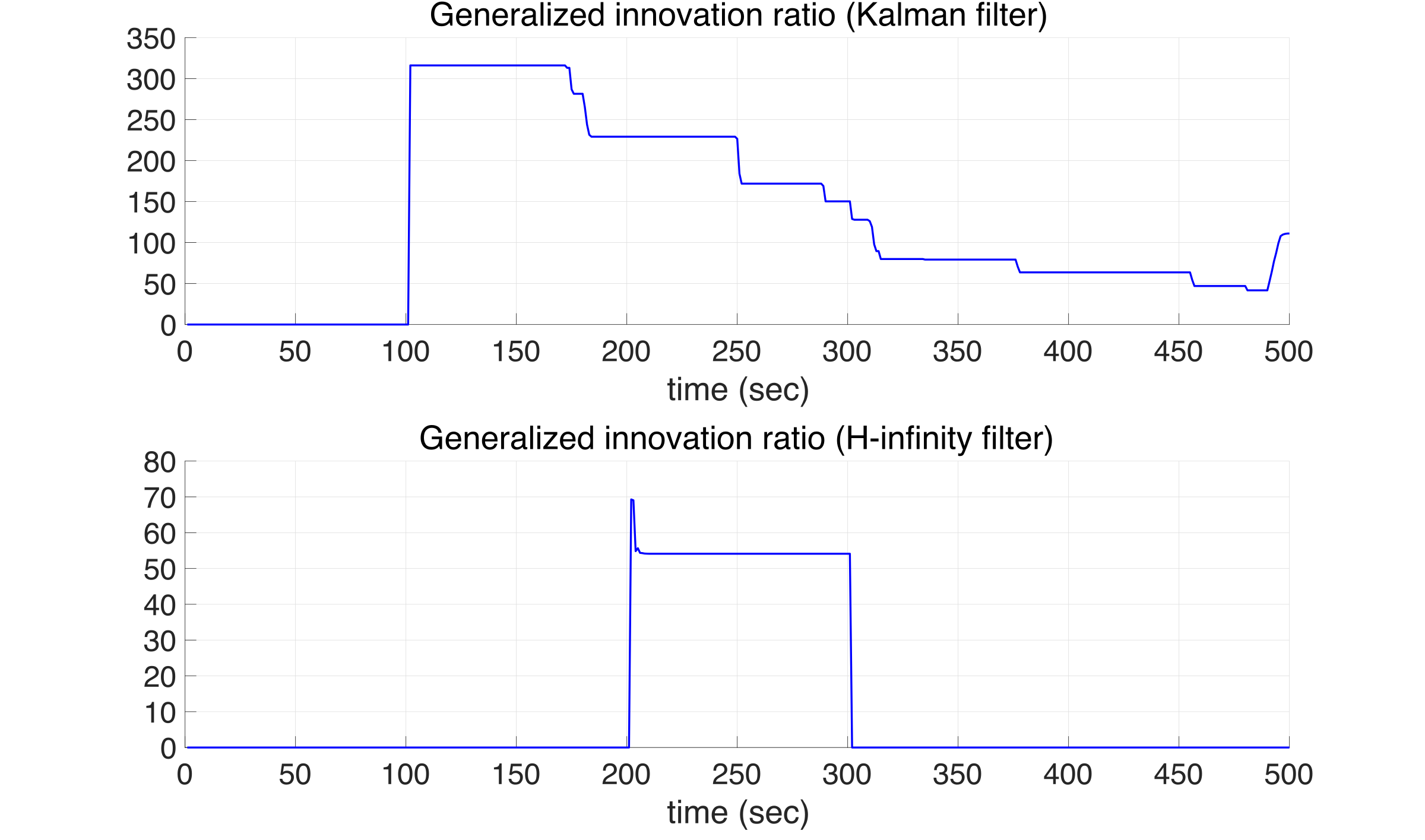}
	\caption{The GIR $h_k$ of Kalman filter (top) and the GIR $h_k$ of $H_{\infty}$ filter (bottom) for the impulsive fault with $\theta = {\left[\, 0.6 ,\, 0 \,\right]}^T$}
	\label{fig4}
\end{figure}

\section{Conclusion}
The solution to the RLS-based $H_{\infty}$ filter is presented first. Then, the fault vector $\theta$ is solved using the least-squares approach. The GIR method, based on the previously developed GLR method and the $H_{\infty}$ filter, is then proposed for fault detection in the presence of non-Gaussian noise. This approach is capable of amplifying abrupt changes (faults) that are difficult to detect using the GLR method. Additionally, due to the robustness of the $H_{\infty}$ filter to variations in system parameters, the GLR method extended with the $H_{\infty}$ filter (referred to as GIR) has demonstrated superior fault detection performance in LTV systems with non-Gaussian noise.

\small
\bibliographystyle{plain}
\bibliography{onecut}

\begin{thebibliography}{10}

\bibitem{basseville1993detection}
Michele Basseville, Igor~V Nikiforov, et~al.
\newblock {\em Detection of abrupt changes: theory and application}, volume 104.
\newblock Prentice hall Englewood Cliffs, 1993.

\bibitem{chen2000generalized}
Robert~H Chen and Jason~L Speyer.
\newblock A generalized least-squares fault detection filter.
\newblock {\em International Journal of Adaptive Control and Signal Processing}, 14(7):747--757, 2000.

\bibitem{chung1998game}
Walter~H Chung and Jason~L Speyer.
\newblock A game theoretic fault detection filter.
\newblock {\em IEEE Transactions on Automatic Control}, 43(2):143--161, 1998.

\bibitem{einicke1999robust}
Garry~A Einicke and Langford~B White.
\newblock Robust extended kalman filtering.
\newblock {\em IEEE transactions on signal processing}, 47(9):2596--2599, 1999.

\bibitem{frank1990fault}
Paul~M Frank.
\newblock Fault diagnosis in dynamic systems using analytical and knowledge-based redundancy: A survey and some new results.
\newblock {\em automatica}, 26(3):459--474, 1990.

\bibitem{hwang2009survey}
Inseok Hwang, Sungwan Kim, Youdan Kim, and Chze~Eng Seah.
\newblock A survey of fault detection, isolation, and reconfiguration methods.
\newblock {\em IEEE transactions on control systems technology}, 18(3):636--653, 2009.

\bibitem{isermann1997supervision}
Rolf Isermann.
\newblock Supervision, fault-detection and fault-diagnosis methods—an introduction.
\newblock {\em Control engineering practice}, 5(5):639--652, 1997.

\bibitem{ji2005fault}
Zhicheng Ji, Rongjia Zhu, and Feng Ding.
\newblock On the fault diagnosis of linear time-varying systems.
\newblock In {\em IEEE International Conference Mechatronics and Automation, 2005}, volume~2, pages 811--814. IEEE, 2005.

\bibitem{lefebvre2002comment}
Tine Lefebvre, Herman Bruyninckx, and Joris De~Schuller.
\newblock Comment on" a new method for the nonlinear transformation of means and covariances in filters and estimators"[with authors' reply].
\newblock {\em IEEE transactions on automatic control}, 47(8):1406--1409, 2002.

\bibitem{li2009time}
Xiaobo Li and Kemin Zhou.
\newblock A time domain approach to robust fault detection of linear time-varying systems.
\newblock {\em Automatica}, 45(1):94--102, 2009.

\bibitem{nie2024optimization}
Yifei Nie, Kunpeng Ren, and Le~Yin.
\newblock An optimization-based framework for constrained $h_{\infty}$ filtering.
\newblock In {\em 2024 43rd Chinese Control Conference (CCC)}, pages 3262--3267. IEEE, 2024.

\bibitem{simon2006optimal}
Dan Simon.
\newblock {\em Optimal state estimation: Kalman, H-infinity, and nonlinear approaches}.
\newblock John Wiley \& Sons, 2006.

\bibitem{sorenson1970least}
Harold~W Sorenson.
\newblock Least-squares estimation: from gauss to kalman.
\newblock {\em IEEE spectrum}, 7(7):63--68, 1970.

\bibitem{willsky1976generalized}
Alan Willsky and H~Jones.
\newblock A generalized likelihood ratio approach to the detection and estimation of jumps in linear systems.
\newblock {\em IEEE Transactions on Automatic control}, 21(1):108--112, 1976.

\bibitem{willsky1974generalized}
Alan~S Willsky and Harold~L Jones.
\newblock A generalized likelihood ratio approach to state estimation in linear systems subjects to abrupt changes.
\newblock In {\em 1974 IEEE conference on decision and control including the 13th symposium on adaptive processes}, pages 846--853. IEEE, 1974.

\bibitem{zhang2014statistical}
Qinghua Zhang and Mich{\`e}le Basseville.
\newblock Statistical detection and isolation of additive faults in linear time-varying systems.
\newblock {\em Automatica}, 50(10):2527--2538, 2014.

\bibitem{zhang2008bibliographical}
Youmin Zhang and Jin Jiang.
\newblock Bibliographical review on reconfigurable fault-tolerant control systems.
\newblock {\em Annual reviews in control}, 32(2):229--252, 2008.

\bibitem{zhong2010designing}
Maiying Zhong, Donghua Zhou, and Steven~X Ding.
\newblock On designing $h_{\infty}$ fault detection filter for linear discrete time-varying systems.
\newblock {\em IEEE Transactions on Automatic Control}, 55(7):1689--1695, 2010.

\end{thebibliography}
\end{document}